\newtheorem{thm}{Th\'eor\`eme}[section]
\newtheorem{pro}[thm]{Proposition}
\theoremstyle{definition}
\newtheorem{rem}[thm]{Remarque}
\newtheorem{defn}[thm]{D\'efinition}
\newtheorem{exam}[thm]{Exemples}
\numberwithin{equation}{section}
\newcommand{\bZ}{\mathbb{Z}}
\newcommand{\bC}{\mathbb{C}}
\newcommand{\bN}{\mathbb{N^{*}}}
\newcommand{\bM}{\mathbb{N}}
\newcommand{\bR}{\mathbb{R}}
\begin{document}

\title{$\lambda$-quiddité sur $\bZ[\alpha]$ avec $\alpha$ transcendant}

\author{Flavien Mabilat}
\address{Laboratoire de Math\'ematiques de Reims,
UMR9008 CNRS et Université de Reims Champagne-Ardenne,
U.F.R. Sciences Exactes et Naturelles \\
Moulin de la Housse - BP 1039\\ 
51687 Reims cedex 2,\\
France}
\email{flavien.mabilat@univ-reims.fr}

\begin{abstract}
Dans le cadre de l'\'etude des frises de Coxeter, M.Cuntz a introduit la notion de $\lambda$-quiddit\'e irr\'eductible. L'objectif de cette note est de lister toutes les $\lambda$-quiddit\'es irr\'eductibles sur l'anneau $\bZ[\alpha]$ dans le cas o\`u $\alpha$ est un nombre complexe transcendant.
\end{abstract}

\maketitle

\begin{flushright}
 \textit{``Toute connaissance est une r\'eponse \`a une question.''} 
\\ Gaston Bachelard, \textit{La formation de l'esprit scientifique}
\end{flushright}

\section{Introduction}
\label{Intro}

Les frises de Coxeter sont des objets math\'ematiques qui poss\`edent des applications dans de nombreux domaines (voir \cite{Mo1}). En particulier, il existe des liens importants entre les frises de Coxeter et les matrices de la forme $\begin{pmatrix}
    x & -1 \\
    1    & 0 
   \end{pmatrix}$. Cela conduit \`a l'\'etude de l'\'equation matricielle suivante : \begin{equation}
\tag{$\tilde{E}$}
\label{q}
M_{n}(a_1,\ldots,a_n)=\begin{pmatrix}
   a_{n} & -1 \\
    1    & 0 
   \end{pmatrix}\cdots
   \begin{pmatrix}
   a_{1} & -1 \\
    1    & 0 
    \end{pmatrix}=-Id
\end{equation} o\`u $(a_1,\ldots,a_n) \in \bC^{n}$. En effet, les solutions de cette \'equation permettent de construire des frises de Coxeter, et, \`a partir d'une frise de Coxeter, on peut obtenir une solution de \eqref{q} (voir \cite{BR} et \cite{CH} proposition 2.4). Cette \'equation se g\'en\'eralise naturellement sous la forme ci-dessous : 
\begin{equation}
\label{p}
\tag{$E$}
M_{n}(a_1,\ldots,a_n)=\pm Id.
\end{equation} Les solutions de cette \'equation sont appel\'ees $\lambda$-quiddit\'es. Afin d'\'etudier ces solutions on introduit une notion d'irr\'eductibilit\'e d\'efinie gr\^ace \`a une op\'eration sur les $n$-uplets (voir \cite{C} et la section suivante). On cherche alors \`a conna\^itre les $\lambda$-quiddit\'es irr\'eductibles sur certaines sous-parties $A$ de $\bC$ c'est-\`a-dire \`a faire la liste de tous les $n$-uplets d'\'el\'ements de $A$ solutions irr\'eductibles de \eqref{p}. M.Cuntz a identifi\'e toutes les $\lambda$-quiddit\'es irr\'eductibles sur $\bZ$ (\cite{CH} Th\'eor\`eme 6.2) et sur $\bM$ (voir \cite{C} Th\'eor\`eme 3.1) et V.Ovsienko a r\'esolu l'\'equation \eqref{p} sur $\bN$ (voir \cite{O} Th\'eor\`eme 2). On peut alors se demander si on peut trouver toutes les solutions irr\'eductibles de \eqref{p} pour d'autres sous-ensembles de $\bC$ (voir \cite{C} probl\`eme ouvert 4.1). Notons que l'on peut \'egalement s'int\'eresser \`a cette \'equation sur d'autres anneaux que $\bC$, notamment sur les anneaux $\bZ/n\bZ$ (voir \cite{Ma,Mo2}).
\\
\\L'objectif ici est de r\'esoudre partiellement ce probl\`eme en d\'ecrivant l'ensemble des $\lambda$-quiddit\'es irr\'eductibles dans le cas de l'anneau $\bZ[\alpha]=\{P(\alpha),~P\in \bZ[X]\}$ o\`u $\alpha$ est un nombre complexe transcendant (c'est-\`a-dire qu'il n'existe pas de polyn\^ome non nul \`a coefficients entiers ayant $\alpha$ pour racine). Le r\'esultat principal est \'enonc\'e dans la section suivante et prouv\'e dans la section \ref{demo}, tandis que la section \ref{Rp} contient des r\'esultats utiles pour la preuve. Cette \'etude peut s'appliquer \`a un grand nombre de cas puisque l'ensemble des nombres r\'eels alg\'ebriques est d\'enombrable (voir \cite{Ca}). En particulier, celle-ci s'applique pour $\alpha=e$ (Th\'eor\`eme de Hermite, voir \cite{G} Théorème III.59) et pour $\alpha=\pi$ (Th\'eor\`eme de Lindemann, voir \cite{G} Théorème III.60). 

\section{R\'esultat principal}
\label{RP}

Soit $A$ un sous-anneau de $\bC$. On commence par d\'efinir formellement le concept de $\lambda$-quiddit\'e sur $A$.

\begin{defn}[\cite{C}, d\'efinition 2.2]
\label{21}
Soit $n \in \bN$. On dit que le $n$-uplet $(a_{1},\ldots,a_{n})$ d'\'el\'ements de $A$ est une $\lambda$-quiddit\'e sur $A$ de taille $n$ si $(a_{1},\ldots,a_{n})$ est une solution de \eqref{p}, c'est-\`a-dire si $M_{n}(a_{1},\ldots,a_{n})=\pm Id.$ S'il n'y a pas d'ambigu\"it\'e on parlera simplement de $\lambda$-quiddit\'e.

\end{defn}

Pour pouvoir \'etudier plus facilement les $\lambda$-quiddit\'es on a besoin des d\'efinitions suivantes :

\begin{defn}[\cite{C}, lemme 2.7]
\label{22}

Soient $(a_{1},\ldots,a_{n})$ un $n$-uplet d'\'el\'ements de $A$ et $(b_{1},\ldots,b_{m})$ un $m$-uplet d'\'el\'ements de $A$ (avec $(n,m) \in (\bN)^{2}$). On d\'efinit l'op\'eration suivante: \[(a_{1},\ldots,a_{n}) \oplus (b_{1},\ldots,b_{m}):= (a_{1}+b_{m},a_{2},\ldots,a_{n-1},a_{n}+b_{1},b_{2},\ldots,b_{m-1}).\] Le $(n+m-2)$-uplet ainsi obtenu est appel\'e la somme de $(a_{1},\ldots,a_{n})$ avec $(b_{1},\ldots,b_{m})$.

\end{defn} 

\begin{exam}On prend ici $A=\bR$. On a: 
\begin{itemize}
\item $(1,0,1) \oplus (1,2,1) = (2,0,2,2)$,
\item $(2,3,5) \oplus (1,0,7) = (9,3,6,0)$,
\item $(1,5,4,3) \oplus (2,4,4,6,2) = (3,5,4,5,4,4,6)$.
\end{itemize}

\end{exam}

Cette op\'eration est particuli\`erement utile puisqu'elle est \`a la base de la notion d'irr\'eductibilit\'e d\'efinie par M.Cuntz et rappel\'ee ci-dessous. Cependant, cette op\'eration n'est ni commutative ni associative (voir \cite{WZ} pour des contre-exemples).

\begin{defn}[\cite{C}, définition 2.5]
\label{23}

Soient $(a_{1},\ldots,a_{n})$ et $(b_{1},\ldots,b_{n})$ deux $n$-uplets d'\'el\'ements de $A$. On dit que $(a_{1},\ldots,a_{n}) \sim (b_{1},\ldots,b_{n})$ si $(b_{1},\ldots,b_{n})$ est obtenu par permutation circulaire de $(a_{1},\ldots,a_{n})$ ou de $(a_{n},\ldots,a_{1})$.

\end{defn}

$\sim$ est une relation d'\'equivalence sur l'ensemble des $n$-uplets d'\'el\'ements de $A$ (voir \cite{WZ}, lemme 1.7). De plus, si un $n$-uplet d'\'el\'ements de $A$ est une $\lambda$-quiddit\'e sur $A$ alors tout $n$-uplet d'\'el\'ements de $A$ qui lui est \'equivalent est aussi une $\lambda$-quiddit\'e sur $A$ (voir \cite{C} proposition 2.6). On peut d\'esormais d\'efinir la notion d'irr\'eductibilit\'e annonc\'ee.

\begin{defn}[\cite{C}, d\'efinition 2.9]
\label{24}

Une $\lambda$-quiddit\'e $(c_{1},\ldots,c_{n})$ sur $A$ avec $n \geq 3$ est dite r\'eductible s'il existe deux $\lambda$-quiddit\'es $(a_{1},\ldots,a_{m})$ et $(b_{1},\ldots,b_{l})$ sur $A$ telles que \begin{itemize}
\item $(c_{1},\ldots,c_{n}) \sim (a_{1},\ldots,a_{m}) \oplus (b_{1},\ldots,b_{l})$,
\item $m \geq 3$ et $l \geq 3$.
\end{itemize}
Une $\lambda$-quiddit\'e est dite irr\'eductible si elle n'est pas r\'eductible.
\end{defn}

\begin{rem} 

{\rm $(0,0)$ est toujours solution de l'\'equation \eqref{p} mais elle n'est pas consid\'er\'ee comme \'etant une solution irr\'eductible de celle-ci.}

\end{rem}

On s'int\'eresse ici aux $\lambda$-quiddit\'es irr\'eductibles sur l'anneau $\bZ[\alpha]$ dans le cas o\`u $\alpha$ est un nombre complexe transcendant. On a le r\'esultat suivant :

\begin{thm}
\label{25}

Soit $\alpha$ un nombre complexe transcendant. Les $\lambda$-quiddit\'es irr\'eductibles sur l'anneau $\bZ[\alpha]$ sont : \[\{(1,1,1), (-1,-1,-1), (0,P(\alpha),0,-P(\alpha)), (P(\alpha),0,-P(\alpha),0); P \in \bZ[X]-\{\pm 1\} \}.\]

\end{thm}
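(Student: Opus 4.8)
The plan is to prove both inclusions of the equality. Since $\alpha$ is transcendent, the map $P\mapsto P(\alpha)$ is a ring isomorphism between $\bZ[X]$ and $\bZ[\alpha]$; consequently $(P_1(\alpha),\ldots,P_n(\alpha))$ is a $\lambda$-quiddit\'e sur $\bZ[\alpha]$ if and only if the \emph{polynomial} identity $M_n(P_1,\ldots,P_n)=\pm Id$ holds in $M_2(\bZ[X])$. I will use repeatedly that the units of $\bZ[X]$ are exactly $\pm 1$, and that, $M_n$ being a polynomial map with integer coefficients, every ring homomorphism commutes with it; in particular evaluation at $X=0$ sends a $\lambda$-quiddit\'e sur $\bZ[\alpha]$ to a $\lambda$-quiddit\'e sur $\bZ$, so that \cite{CH} Théorème 6.2 applies to it.

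For the easy inclusion I first check by direct computation that the listed tuples are $\lambda$-quiddit\'es: $M_3(1,1,1)=-Id$, $M_3(-1,-1,-1)=Id$, and $M_4(0,P,0,-P)=M_4(P,0,-P,0)=Id$ (these two forms are moreover $\sim$-equivalent, since a cyclic shift turns $(0,P,0,-P)$ into $(P,0,-P,0)$). For irreducibility, the size-$3$ tuples are irreducible because any reducible $\lambda$-quiddit\'e has size $m+l-2\geq 4$. To treat the size-$4$ tuples I determine all size-$3$ $\lambda$-quiddit\'es: solving $M_3(x,y,z)=\pm Id$ entrywise forces $-y=\pm1$, $xy=1$, $zy=1$ and $-x=\pm1$, whence only $(1,1,1)$ and $(-1,-1,-1)$ occur. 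Therefore a reducible size-$4$ $\lambda$-quiddit\'e is $\sim A\oplus B$ with $A,B\in\{(1,1,1),(-1,-1,-1)\}$, giving exactly the four tuples $(2,1,2,1)$, $(0,1,0,-1)$, $(0,-1,0,1)$, $(-2,-1,-2,-1)$ up to $\sim$. These are precisely the instances $P=\pm1$ of the two families, so $(0,P,0,-P)$ and $(P,0,-P,0)$ are irreducible exactly when $P\neq\pm1$.

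For the reverse inclusion, let $(P_1,\ldots,P_n)$ be an irreducible $\lambda$-quiddit\'e. The sizes $n=1,2$ are impossible (the matrix $M_1$ has $-1$ off the diagonal, and $M_2=\pm Id$ forces $(0,0)$, which is excluded), and $n=3$ is settled above, so assume $n\geq 4$. The top-left entry of $M_n$ is the continuant $K(P_1,\ldots,P_n)$, which must be a constant in $\{0,\pm1\}$, and likewise for the three remaining continuant entries. Expanding $K$ by Euler's rule, its coefficient in degree $\sum_i\deg P_i$ factors as $\big(\prod_{\deg P_i\geq1}\mathrm{lead}(P_i)\big)\cdot C$, where $C$ is a continuant built from the constant entries alone; as the first factor is nonzero this forces $C=0$, together with the analogous relations coming from the other entries. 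I then use the zero-reduction identity \[\begin{pmatrix}b&-1\\1&0\end{pmatrix}\begin{pmatrix}0&-1\\1&0\end{pmatrix}\begin{pmatrix}a&-1\\1&0\end{pmatrix}=-\begin{pmatrix}a+b&-1\\1&0\end{pmatrix},\] which merges the two neighbours of a $0$ and lowers the size by $2$. Combined with the relation $C=0$, this should show that a non-constant entry can occur only flanked by zeros, forcing, for $n\geq5$, a size reduction incompatible with irreducibility, and leaving $(0,P,0,-P)$ and $(P,0,-P,0)$ as the only configurations containing a non-constant entry. A purely constant irreducible tuple is, a fortiori, an irreducible integer $\lambda$-quiddit\'e (since $\bZ\subseteq\bZ[\alpha]$, every $\bZ$-decomposition would also be a $\bZ[\alpha]$-decomposition), so \cite{CH} Théorème 6.2 identifies it among the constant members of the list, completing the classification.

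The main obstacle is this last paragraph. The degree computation only yields the presence of constant entries and the vanishing relation $C=0$; the delicate step is upgrading ``a non-constant entry has constant neighbours'' to ``a non-constant entry has \emph{zero} neighbours,'' while simultaneously ruling out every irreducible configuration of size $\geq5$. I expect to handle this by exploiting all four continuant relations (the top-left, top-right, bottom-left and bottom-right entries of $M_n$) together with evaluation at $X=0$, which must land on a known integer irreducible by \cite{CH}; this should constrain both the constant terms and the leading coefficients of the $P_i$ tightly enough to force the $(0,P,0,-P)$ pattern. The sign bookkeeping between $+Id$ and $-Id$ under repeated zero-reductions will also require care.
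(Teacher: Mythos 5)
Your first inclusion is essentially sound (modulo the small slip that $(2,1,2,1)$ and $(-2,-1,-2,-1)$ are not instances of the two four-term families, which does not affect the conclusion that $(0,P(\alpha),0,-P(\alpha))$ is irreducible exactly when $P\neq\pm1$). The reverse inclusion --- the actual content of the theorem --- is not proved, however: your last paragraph explicitly leaves open the step that would turn ``some entries are constant and the product of the continuants of the constant blocks vanishes'' into ``every $\lambda$-quiddit\'e of size $\geq 5$ is reducible''. The leading-coefficient computation by itself only yields the relation $C=0$; nothing in the proposal extracts from it an entry equal to $0$, $1$ or $-1$, which is what the reduction identities need, nor does it rule out irreducible configurations of size $\geq 5$. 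Moreover the auxiliary device you propose, evaluating at $X=0$ and invoking \cite{CH} Th\'eor\`eme 6.2, cannot work as stated, because evaluation does not preserve irreducibility: $(0,1+\alpha,0,-1-\alpha)$ is irreducible over $\bZ[\alpha]$ (it contains no $\pm1$ since $\alpha$ is transcendant) but specializes at $X=0$ to $(0,1,0,-1)=(1,1,1)\oplus(-1,-1,-1)$, which is reducible over $\bZ$. So the specialized tuple need not be ``a known integer irreducible''.

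The paper closes this gap by a specialization argument that avoids the combinatorics of continuant coefficients altogether. Since the four continuant entries of $M_n(P_1(X),\dots,P_n(X))$ are integer polynomials taking the values $0,\pm1$ at the transcendental $\alpha$, they are constant polynomials; hence $(P_1(a),\dots,P_n(a))$ is a $\lambda$-quiddit\'e sur $\bZ$ for \emph{every} integer $a$. Choosing $a$ so large that every non-constant $P_i$ satisfies $|P_i(a)|\geq 3$ and applying the Cuntz--Holm bound (\cite{CH}, corollaire 3.3: every complex $\lambda$-quiddit\'e has an entry of modulus $<2$) forces some $P_i$ to be a constant belonging to $\{0,1,-1\}$. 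The reduction lemmas (an entry $\epsilon\in\{\pm1\}$ allows splitting off $(\epsilon,\epsilon,\epsilon)$ when $n\geq4$; an entry $0$ allows splitting off $(-a,0,a,0)$ when $n\geq5$) then make every $\lambda$-quiddit\'e of size $\geq5$ reducible, and the size-$4$ case is finished by the explicit classification, the family $(Q(\alpha),R(\alpha),Q(\alpha),R(\alpha))$ with $Q(\alpha)R(\alpha)=2$ degenerating by transcendence to constant $Q,R$, one of which is $\pm1$. If you wish to salvage your continuant route, you would still need, in effect, the fact that an integer continuant with all entries of modulus $\geq2$ cannot vanish --- which is the Cuntz--Holm estimate in disguise; the specialization at a large integer is the cleanest way to import it.
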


Ce r\'esultat est prouv\'e dans la section \ref{demo}.

\section{R\'esultats pr\'eliminaires}
\label{Rp}

On donne dans cette section des r\'esultats qui nous seront utiles pour d\'emontrer le th\'eor\`eme \ref{25}.

\subsection{G\'en\'eralit\'es sur les $\lambda$-quiddit\'es}
\label{G}

Soit $A$ un sous-anneau de $\bC$. On commence par chercher les solutions de \eqref{p} pour les petites valeurs de $n$.

\begin{pro}
\label{31}
\begin{itemize}
\item $(0,0)$ est la seule solution de \eqref{p} de taille 2.
\item $(1,1,1)$ et $(-1,-1,-1)$ sont les seules solutions de \eqref{p} de taille 3.
\item Les solutions de \eqref{p} pour $n=4$ sont les 4-uplets suivants $(-a,b,a,-b)$ avec $ab=0$ et $(a,b,a,b)$ avec $ab=2$.
\end{itemize}

\end{pro}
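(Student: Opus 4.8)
The plan is to compute the matrix product $M_n(a_1,\ldots,a_n)$ explicitly for each small value of $n$ and to set it equal to $\pm Id$, reading off the resulting system of equations on the entries. Since the matrices involved are $2\times 2$, each case reduces to a small system of polynomial equations in the $a_i$, which can be solved by hand.

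For $n=2$, I would simply expand
\[
M_2(a_1,a_2)=\begin{pmatrix} a_2 & -1 \\ 1 & 0 \end{pmatrix}\begin{pmatrix} a_1 & -1 \\ 1 & 0 \end{pmatrix}=\begin{pmatrix} a_1a_2-1 & -a_2 \\ a_1 & -1 \end{pmatrix}.
\]
Setting this equal to $\pm Id$ forces the off-diagonal entries $a_1$ and $a_2$ to vanish, and then the diagonal reads $-1=\pm 1$, so only $-Id$ is achievable and $(0,0)$ is the unique solution. For $n=3$ I would multiply three such factors, obtaining a matrix whose four entries are explicit polynomials in $a_1,a_2,a_3$; setting the off-diagonal entries to $0$ and the diagonal entries to a common value $\varepsilon=\pm1$ yields a system whose only solutions are $(1,1,1)$ (giving $-Id$) and $(-1,-1,-1)$ (giving $+Id$), after checking the sign constraints. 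These two cases are essentially bookkeeping.

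The case $n=4$ is the one requiring genuine care, since here a one-parameter family appears. I would expand $M_4(a_1,a_2,a_3,a_4)$ fully and equate it to $\varepsilon Id$ for $\varepsilon\in\{+1,-1\}$. The vanishing of the off-diagonal entries gives two polynomial relations among the four variables, and the diagonal entries give two further relations tying everything to $\varepsilon$. The computation should split naturally into the two sign cases: the case $\varepsilon=+1$ will produce the family $(-a,b,a,-b)$ with the constraint $ab=0$, and the case $\varepsilon=-1$ will produce the family $(a,b,a,b)$ with $ab=2$. The main obstacle is organizing the four simultaneous polynomial equations so that these two families emerge cleanly rather than as a tangle of subcases; I would handle this by first solving the two off-diagonal equations for, say, $a_3$ and $a_4$ in terms of $a_1,a_2$, then substituting into the diagonal conditions and separating according to the value of $\varepsilon$. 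Throughout, no transcendence of $\alpha$ is needed—these identities hold over any commutative ring, so the proposition is purely a statement about the ring $A\subseteq\bC$.
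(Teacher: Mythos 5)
Your proposal is correct and coincides with the paper's own treatment: the paper simply states that the proof reduces to matrix computations (citing \cite{CH}, exemple 2.7), which is exactly the explicit expansion of $M_n$ and case analysis on $\varepsilon=\pm1$ that you carry out. The $n=2$ computation you give is right, and the sketched organization of the $n=4$ case (splitting on $\varepsilon$ via the bottom-right entry and solving the off-diagonal conditions) does yield the two announced families.
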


\begin{proof}

La preuve se r\'eduit \`a des calculs matriciels (voir par exemple \cite{CH} exemple 2.7).

\end{proof}

\begin{pro}
\label{32}

Soit $(b_{1},\ldots,b_{m})$ une $\lambda$-quiddit\'e. Soit $(a_{1},\ldots,a_{n}) \in A^{n}$. La somme $(a_{1},\ldots,a_{n}) \oplus (b_{1},\ldots,b_{m})$ est une $\lambda$-quiddit\'e si et seulement si $(a_{1},\ldots,a_{n})$~est une $\lambda$-quiddit\'e.

\end{pro}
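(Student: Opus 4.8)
The plan is to analyze the matrix product structure of the sum operation. First I would compute the matrix associated to the concatenation encoded by $\oplus$. The key observation is that the operation $(a_1,\ldots,a_n)\oplus(b_1,\ldots,b_m)$ is designed precisely so that its product matrix $M_{n+m-2}$ factors through $M_n$ and $M_m$. Concretely, I would establish the identity
\[
M_{n+m-2}\bigl((a_1,\ldots,a_n)\oplus(b_1,\ldots,b_m)\bigr) = M_n(a_1,\ldots,a_n)\, M_m(b_1,\ldots,b_m),
\]
perhaps up to an explicit correction coming from how the overlapping entries $a_1+b_m$ and $a_n+b_1$ split. The cleanest route is to introduce the elementary factor $\left(\begin{smallmatrix} x & -1 \\ 1 & 0\end{smallmatrix}\right)$ and notice that the product of two such factors with a "glued" middle entry can be separated: writing the middle matrices at the junction and using that $\left(\begin{smallmatrix} a+b & -1 \\ 1 & 0\end{smallmatrix}\right) = \left(\begin{smallmatrix} a & -1 \\ 1 & 0\end{smallmatrix}\right) + \left(\begin{smallmatrix} b & 0 \\ 0 & 0\end{smallmatrix}\right)$ is not quite multiplicative, so I would instead track the product directly through the junction points.

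Once the factorization identity is in hand, the proposition follows immediately. Assuming $(b_1,\ldots,b_m)$ is a $\lambda$-quiddit\'e means $M_m(b_1,\ldots,b_m)=\varepsilon\, \mathrm{Id}$ with $\varepsilon\in\{\pm 1\}$. Substituting into the factorization gives
\[
M_{n+m-2}\bigl((a_1,\ldots,a_n)\oplus(b_1,\ldots,b_m)\bigr) = \varepsilon\, M_n(a_1,\ldots,a_n).
\]
Then the sum is a $\lambda$-quiddit\'e, i.e. equals $\pm\,\mathrm{Id}$, if and only if $\varepsilon\, M_n(a_1,\ldots,a_n)=\pm\,\mathrm{Id}$, which (since $\varepsilon=\pm 1$) is equivalent to $M_n(a_1,\ldots,a_n)=\pm\,\mathrm{Id}$, namely $(a_1,\ldots,a_n)$ being a $\lambda$-quiddit\'e. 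Both implications fall out of the same equation, so the equivalence is clean.

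I expect the main obstacle to be the bookkeeping at the two junctions where the entries are added, since $\oplus$ does not simply concatenate but merges the endpoints: the entry $a_1$ is combined with $b_m$ and $a_n$ with $b_1$, and the resulting tuple is a cyclic rearrangement rather than a literal concatenation. The delicate point is verifying that this particular gluing is exactly what makes the product of the two matrices $M_n$ and $M_m$ come out, rather than some other expression; this is where one must be careful about the order of factors (recall $M_n$ lists the matrices in reverse index order) and about the fact that $\oplus$ is neither commutative nor associative, as already noted after Definition \ref{22}. A convenient way to organize the computation is to prove the factorization identity first as an independent lemma by induction on $m$ (or by directly expanding the boundary $2\times 2$ blocks), treating the reduction to small cases via Proposition \ref{31} if needed; alternatively one may cite the corresponding computation in \cite{C} or \cite{CH}, since this factorization is the structural reason the operation $\oplus$ was introduced.
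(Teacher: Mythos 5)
Your strategy --- prove a product formula for the matrix of the sum and read the equivalence off from it --- is exactly the approach of the paper, whose own proof is a single sentence deferring the computation to the proofs of Lemme 2.7 de \cite{C} et Lemme 1.9 de \cite{WZ}. But the identity you display, $M_{n+m-2}\bigl((a_{1},\ldots,a_{n})\oplus(b_{1},\ldots,b_{m})\bigr)=M_{n}(a_{1},\ldots,a_{n})M_{m}(b_{1},\ldots,b_{m})$, is false as stated, and the ``explicit correction'' you leave open is precisely the content of the lemma, so as written the argument has a hole at its central step. A counterexample: $(1,1,1)\oplus(1,1,1)=(2,1,2,1)$ and $M_{4}(2,1,2,1)=-Id$, whereas $M_{3}(1,1,1)^{2}=(-Id)^{2}=Id$; in general both a sign and a conjugation intervene.

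The correct identity comes from the observation that, writing $V(x)=\begin{pmatrix} x & -1\\ 1 & 0\end{pmatrix}$, one has $V(x)V(0)^{-1}=\begin{pmatrix} 1 & x\\ 0 & 1\end{pmatrix}$, hence $V(x+y)=V(x)V(0)^{-1}V(y)$. Splitting the two junction entries $a_{1}+b_{m}$ and $a_{n}+b_{1}$ in this way and regrouping the factors (keeping the reverse ordering of $M_{k}$ in mind) gives
\[
M_{n+m-2}\bigl((a_{1},\ldots,a_{n})\oplus(b_{1},\ldots,b_{m})\bigr)=V(b_{m})^{-1}\,M_{m}(b_{1},\ldots,b_{m})\,V(0)^{-1}\,M_{n}(a_{1},\ldots,a_{n})\,V(0)^{-1}V(b_{m}).
\]
If $M_{m}(b_{1},\ldots,b_{m})=\epsilon Id$ with $\epsilon=\pm1$, then using $V(0)^{2}=-Id$ this collapses to
\[
M_{n+m-2}\bigl((a_{1},\ldots,a_{n})\oplus(b_{1},\ldots,b_{m})\bigr)=-\epsilon\,U^{-1}\,M_{n}(a_{1},\ldots,a_{n})\,U,\qquad U=V(0)^{-1}V(b_{m}),
\]
so the matrix of the sum is a conjugate of $\pm M_{n}(a_{1},\ldots,a_{n})$. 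Since a conjugate of a matrix equals $\pm Id$ if and only if the matrix itself does, the equivalence of the proposition follows, and the hypothesis that $(b_{1},\ldots,b_{m})$ (rather than $(a_{1},\ldots,a_{n})$) be a $\lambda$-quiddit\'e is used exactly where you expected, to collapse the $M_{m}$ factor. With this identity substituted for the one you wrote, the rest of your argument is correct and coincides with the cited computations.
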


\begin{proof}

La preuve est essentiellement calculatoire. Pour le d\'etail des calculs, on peut consulter la preuve du lemme 2.7 de \cite{C} et la preuve du lemme 1.9 de \cite{WZ}.

\end{proof}

Une des cons\'equences imm\'ediates de ce r\'esultat est qu'il n'est pas n\'ecessaire que $(a_{1},\ldots,a_{m})$ soit une $\lambda$-quiddit\'e dans la d\'efinition \ref{24}. Ceci nous permet d'avoir le r\'esultat suivant :

\begin{pro}
\label{33}

i) Les $\lambda$-quiddit\'es de taille 3 sont irr\'eductibles.
\\ii)Une $\lambda$-quiddit\'e de taille 4 r\'eductible contient $1$ ou $-1$.
\\iii) Si $n \geq 4$ alors une $\lambda$-quiddit\'e de tille 4 contenant $1$ ou $-1$ est r\'eductible.
\\ iv) Une $\lambda$-quiddit\'e de taille supérieure à 5 contenant $0$ est r\'eductible.

\end{pro}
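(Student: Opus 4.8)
The plan is to treat the four parts in turn, leaning on three ingredients: the size rule for $\oplus$ (a sum of an $m$-uplet with an $l$-uplet is an $(m+l-2)$-uplet, Definition \ref{22}), the classification of small $\lambda$-quiddit\'es in Proposition \ref{31}, and Proposition \ref{32}, whose role is to certify that a ``leftover'' uplet is a $\lambda$-quiddit\'e without any direct matrix computation.

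Parts i) and ii) reduce to size bookkeeping. For i), any reductible decomposition of a size-$3$ $\lambda$-quiddit\'e would require summands of sizes $m,l\geq 3$, hence a sum of size $m+l-2\geq 4$; since $\sim$ preserves the size, this can never be equivalent to a $3$-uplet. For ii), the equation $m+l-2=4$ with $m,l\geq 3$ forces $m=l=3$, so by Proposition \ref{31} both summands are $(1,1,1)$ or $(-1,-1,-1)$. Reading off
\[(a_{1},a_{2},a_{3})\oplus(b_{1},b_{2},b_{3})=(a_{1}+b_{3},a_{2},a_{3}+b_{1},b_{2})\]
shows that the entry $a_{2}\in\{1,-1\}$ appears unchanged; as $\sim$ only permutes entries, the original $4$-uplet contains $1$ or $-1$.

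The substance lies in iii) and iv), where I must exhibit an explicit factorization by peeling off a small $\lambda$-quiddit\'e. For iii), a circular permutation brings the distinguished $\pm1$ into the last slot; in the case $c_{n}=1$ I would verify, by unwinding the definition of $\oplus$, that
\[(c_{1},\ldots,c_{n})=(c_{1}-1,c_{2},\ldots,c_{n-2},c_{n-1}-1)\oplus(1,1,1).\]
The left summand lies in $A^{n-1}$ and has size $n-1\geq 3$ (this is exactly where $n\geq 4$ is used), while $(1,1,1)$ is a $\lambda$-quiddit\'e, so Proposition \ref{32} forces the left summand to be a $\lambda$-quiddit\'e as well, whence reducibility; the case $c_{n}=-1$ is symmetric, peeling off $(-1,-1,-1)$. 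For iv) I would argue identically with the size-$4$ $\lambda$-quiddit\'e $(0,b,0,-b)$, which is a $\lambda$-quiddit\'e for every $b$ by Proposition \ref{31} (take $a=0$ in $(-a,b,a,-b)$). Placing a $0$ of the tuple last, I check
\[(c_{1},\ldots,c_{n-1},0)=(c_{1}+c_{n-1},c_{2},\ldots,c_{n-2})\oplus(0,c_{n-1},0,-c_{n-1}),\]
where the left summand has size $n-2\geq 3$ precisely because $n\geq 5$, and Proposition \ref{32} again promotes it to a $\lambda$-quiddit\'e.

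The only delicate point is the combinatorial bookkeeping: choosing the right entry to move to the last position via $\sim$ and inverting $\oplus$ so that the two chosen summands recombine exactly to the given uplet. Proposition \ref{32} is what keeps the argument short, since it removes any need to check the leftover piece by hand, and the hypotheses $n\geq 4$ in iii) and $n\geq 5$ in iv) are dictated precisely by the requirement that the leftover reach the minimal admissible size $3$ (consistently, the size-$4$ $\lambda$-quiddit\'es $(0,b,0,-b)$ with $b\neq\pm1$ escape iv) and remain irreducible).
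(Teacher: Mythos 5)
Your proposal is correct and follows essentially the same route as the paper: size bookkeeping for i) and ii), and for iii) and iv) an explicit peeling-off of $(\epsilon,\epsilon,\epsilon)$ resp.\ a $4$-uplet of the form $(0,b,0,-b)$, with Proposition \ref{32} certifying the leftover summand. The only differences are cosmetic (you place the distinguished entry last by a circular permutation where the paper keeps an index $i$, and your part ii) spells out why the middle entries survive the sum, which the paper leaves implicit).
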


\begin{proof}

i)Si un 3-uplet est somme d'un $m$-uplet avec un $l$-uplet alors on a $3=m+l-2$ et donc $m+l=5$. Ceci implique $m \leq 2$ ou $l \leq 2$. Donc, les $\lambda$-quiddit\'es de taille 3 sont irr\'eductibles.
\\
\\ii)Soit $(a_{1},a_{2},a_{3},a_{4})$ une $\lambda$-quiddit\'e. 
\\
\\Si $(a_{1},a_{2},a_{3},a_{4})$ est r\'eductible alors $(a_{1},a_{2},a_{3},a_{4})$ est \'equivalent \`a la somme d'un $m$-uplet solution de \eqref{p} avec un $l$-uplet solution de \eqref{p} avec $m, l \geq 3$. On a $m+l-2=4$ donc $m+l=6$ et comme $m, l \geq 3$ on a n\'ecessairement $m=l=3$. Comme les $\lambda$-quiddit\'es de taille 3 contiennent $1$ ou $-1$ (par i)), une $\lambda$-quiddit\'e r\'eductible de taille 4 contient $1$ ou $-1$.
\\
\\iii)Soit $(a_{1},\ldots,a_{n})$ (avec $n \geq 4$) une solution de \eqref{p}. S'il existe $\epsilon$ dans $\{-1, 1\}$ et s'il existe $i$ dans $[\![1;n]\!]$ tels que $a_{i}=\epsilon$ alors on a :\[(a_{i+1},\ldots,a_{n},a_{1},\ldots,a_{i})=(a_{i+1}-\epsilon,\ldots,a_{n},a_{1},\ldots,a_{i-1}-\epsilon) \oplus (\epsilon,\epsilon,\epsilon).\]
Comme $(\epsilon,\epsilon,\epsilon)$ est solution de \eqref{p} (voir proposition \ref{31}), $(a_{1},\ldots,a_{n})$ est une $\lambda$-quiddit\'e r\'eductible.
\\
\\iv) Soit $(a_{1},\ldots,a_{n})$ (avec $n \geq 5$) une solution de \eqref{p}. S'il existe $i$ dans $[\![1;n]\!]$ tel que $a_{i}=0$ alors on a :

\begin{eqnarray*}
(a_{i+2},\ldots,a_{n},a_{1},\ldots,a_{i},a_{i+1}) &=& (a_{i+2},\ldots,a_{n},a_{1},\ldots,a_{i-1}+a_{i+1}) \\
                                                  &\oplus & (-a_{i+1},0,a_{i+1},0).\\
\end{eqnarray*}

Comme $(-a_{i+1},0,a_{i+1},0)$ est solution de \eqref{p} (voir proposition \ref{31}), on a que $(a_{1},\ldots,a_{n})$ est une $\lambda$-quiddit\'e r\'eductible.

\end{proof}

On dispose \'egalement du r\'esultat suivant qui permet d'avoir une majoration des $\left|a_{i}\right|$ :

\begin{thm}[Cuntz-Holm, \cite{CH} corollaire 3.3]
\label{34}

Soit $(a_{1},\ldots,a_{n}) \in \bC^{n}$ une $\lambda$-quiddit\'e. $\exists (i,j) \in [\![1;n]\!]^{2}$, $i \neq j$, tels que $\left|a_{i}\right| < 2$ et $\left|a_{j}\right| < 2$.

\end{thm}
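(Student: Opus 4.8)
The plan is to track the first column of the partial products and exploit a simple growth estimate. Write $M_0 = Id$ and $M_k = \begin{pmatrix} a_k & -1 \\ 1 & 0 \end{pmatrix} M_{k-1}$ for $1 \le k \le n$, and let $(x_k, y_k)$ be the first column of $M_k$. Reading off the product gives $y_k = x_{k-1}$ together with the continuant recurrence
\[
x_k = a_k x_{k-1} - x_{k-2}, \qquad x_0 = 1, \quad x_{-1} = 0.
\]
Since $(a_1,\ldots,a_n)$ is a $\lambda$-quiddité, $M_n = \pm Id$, so its first column equals $\pm\begin{pmatrix} 1 \\ 0 \end{pmatrix}$; in particular $y_n = x_{n-1} = 0$. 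Thus the whole question reduces to understanding when the continuant $x_{n-1}$ is forced to vanish.

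The key step I would isolate is a growth lemma: if $|a_k| \ge 2$ for every $1 \le k \le m$, then $|x_m| > |x_{m-1}| > \cdots > |x_0| = 1$, so in particular $x_m \ne 0$. This is proved by induction, the base case being $|x_1| = |a_1| \ge 2 > 1 = |x_0|$, and the inductive step using the triangle inequality $|x_k| \ge |a_k|\,|x_{k-1}| - |x_{k-2}| \ge 2|x_{k-1}| - |x_{k-2}|$, which exceeds $|x_{k-1}|$ once $|x_{k-1}| > |x_{k-2}|$ is known.

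With these tools the conclusion follows by contradiction. Suppose at most one of the $|a_i|$ is $< 2$. Using the cyclic invariance of $\lambda$-quiddités — if $M_n(a_1,\ldots,a_n) = \pm Id$ then $M_n(a_{i+1},\ldots,a_n,a_1,\ldots,a_i) = \pm Id$, which follows from $BA = B(AB)B^{-1}$ with $\pm Id$ central, or directly from the invariance of the $\lambda$-quiddité property under $\sim$ — I would rotate the tuple so that the unique possible small coordinate is placed last. Then $a_1,\ldots,a_{n-1}$ all satisfy $|a_k| \ge 2$, so the growth lemma gives $x_{n-1} \ne 0$, contradicting $x_{n-1} = 0$. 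Hence at least two of the entries have modulus $< 2$, and since these occupy distinct positions they provide the required pair $i \ne j$.

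The main obstacle is exactly the passage from one small entry to two: a direct application of the growth estimate only shows that the $|a_i|$ cannot all be $\ge 2$. The device that upgrades this to the sharper statement is the cyclic symmetry, which lets me always assume the single small coordinate sits in the last slot and then run the growth estimate over the first $n-1$ (large) coordinates up to the index where the continuant must vanish. Some care is needed with the degenerate cases ($n = 2$, where $(0,0)$ is the only solution and already has two small entries), but these are settled at once by Proposition \ref{31}.
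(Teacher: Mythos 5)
Your proof is correct and complete: the continuant recurrence $x_k=a_kx_{k-1}-x_{k-2}$, the growth lemma ($|a_k|\ge 2$ for all $k\le m$ forces $|x_m|>|x_{m-1}|>\cdots>|x_0|=1$, hence $x_m\ne 0$), and the cyclic rotation placing the unique possible small entry last so that the forced vanishing of $K_{n-1}(a_1,\ldots,a_{n-1})$ yields the contradiction, all fit together without gaps. Be aware that the paper itself gives no proof of this statement --- it is imported as an external result from \cite{CH} (corollaire 3.3) --- but your argument is essentially the one used in that source, and each ingredient you invoke (the triangle-inequality induction over $\bC$, the conjugation argument for cyclic invariance, the treatment of $n=2$) is sound.
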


Ce r\'esultat permet de r\'esoudre la question de la recherche des $\lambda$-quiddit\'es irr\'eductibles pour de nombreux sous-anneaux $A$ de $\bC$. Par exemple, pour l'anneau $A=\bZ[2i]=\{a+2bi,~(a,b) \in \bZ\}$.
\\
\\En effet, si $z=a+2bi \in \bZ[2i]$ alors $\left|z\right|=\sqrt{a^{2}+4b^{2}} \geq 2\left|b\right|$. Ainsi, $\left|z\right| < 2$ si et seulement si $z \in \{-1, 0, 1\}$. Donc, les $\lambda$-quiddit\'es sur $\bZ[2i]$ de taille sup\'erieure \`a 5 sont r\'eductibles par la proposition \ref{33}. Par ailleurs, on montre facilement que les solutions de taille 4 de la forme $(a,b,a,b)$, avec la condition $ab=2$, contiennent toujours $\pm 1$ et donc sont r\'eductibles par la proposition \ref{33}. Donc, Les $\lambda$-quiddit\'es irr\'eductibles sur $\bZ[2i]$ sont les \'el\'ements suivants : $(1,1,1)$, $(-1,-1,-1)$, $(-z,0,z,0)$ et $(0,-z,0,z)$ (avec $z \in \bZ[2i]-\{\pm  1\}$). En particulier, on remarque que le cas de $\bZ[2i]$ est beaucoup plus simple que celui de de $\bZ[i]$ (voir \cite{C} proposition 3.3).
\\
\\Bien que l'on n'ait pas besoin de cela dans la suite on peut remarquer que la constante $2$ du th\'eor\`eme pr\'ec\'edent est optimale. En effet, on a le r\'esultat suivant:

\begin{pro}

Soit $\epsilon \in ]0,2]$. Il existe une solution de \eqref{p} dont toutes les composantes sont de module sup\'erieur \`a $2-\epsilon$.

\end{pro}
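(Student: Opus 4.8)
The plan is to exhibit, for each $\epsilon \in ]0,2]$, an explicit \emph{constant} solution of \eqref{p} all of whose components equal a single real number $x$ with $2-\epsilon < x < 2$. Since all the $a_i$ coincide, $M_{n}(x,\ldots,x)$ is simply the $n$-th power of the single matrix $R(x):=\begin{pmatrix} x & -1 \\ 1 & 0 \end{pmatrix}$, independently of the order of the factors. So everything reduces to understanding the powers of $R(x)$.

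First I would compute the spectral data of $R(x)$. One has $\det R(x)=1$ and $\operatorname{tr} R(x)=x$, so for $x=2\cos\theta$ with $\theta \in ]0,\pi[$ the characteristic polynomial is $\lambda^{2}-2\cos\theta\,\lambda+1$, whose two roots $e^{i\theta}$ and $e^{-i\theta}$ are distinct. Hence $R(2\cos\theta)$ is diagonalizable, say $R(2\cos\theta)=P\,\mathrm{diag}(e^{i\theta},e^{-i\theta})\,P^{-1}$ for some invertible $P$, and therefore
\[
R(2\cos\theta)^{n}=P\,\mathrm{diag}(e^{in\theta},e^{-in\theta})\,P^{-1}.
\]

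Next I would specialize $\theta=\pi/n$, taking $n \geq 3$ so that $\theta \in ]0,\pi[$ and the eigenvalues are genuinely distinct. Then $e^{in\theta}=e^{-in\theta}=e^{i\pi}=-1$, so the diagonal factor equals $-Id$, which is central; conjugating by $P$ leaves it unchanged and yields $R(2\cos(\pi/n))^{n}=-Id$. Consequently the constant $n$-uplet $\big(2\cos(\pi/n),\ldots,2\cos(\pi/n)\big)$ is a solution of \eqref{p}.

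Finally I would observe that $2\cos(\pi/n)\to 2$ as $n\to\infty$ while $0<2\cos(\pi/n)<2$ for every $n\geq 3$. Given $\epsilon \in ]0,2]$ one has $2-\epsilon<2$, so there exists $n$ large enough that $2-\epsilon<2\cos(\pi/n)$. For such an $n$, every component of the solution above has modulus $2\cos(\pi/n)\in ]2-\epsilon,2[$, which is exactly the desired conclusion, and it also shows the bound $2$ in Theorem~\ref{34} cannot be improved. I expect no real obstacle here: the only delicate point is the diagonalization step, i.e.\ making sure $\theta$ avoids $0$ and $\pi$ so that $R$ has distinct eigenvalues, and this is automatic as soon as $n\geq 3$. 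An alternative, avoiding diagonalization, is to prove by induction the closed form $R(x)^{n}=\begin{pmatrix} S_{n} & -S_{n-1} \\ S_{n-1} & -S_{n-2} \end{pmatrix}$ where $S_{k}(2\cos\theta)=\sin\big((k+1)\theta\big)/\sin\theta$, and read off the same values at $\theta=\pi/n$; but the eigenvalue argument is shorter.
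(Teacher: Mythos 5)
Your argument is correct and follows essentially the same route as the paper: the author also takes the constant $n$-uplet with entries $u_{n}=2\cos(\pi/n)$, diagonalizes the matrix $\begin{pmatrix} u_{n} & -1 \\ 1 & 0 \end{pmatrix}$ using its eigenvalues $e^{\pm i\pi/n}$, deduces that its $n$-th power is $-Id$, and concludes by letting $n\to\infty$. Your remark that $n\geq 3$ guarantees distinct eigenvalues is a small but welcome precision that the paper leaves implicit.
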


\begin{proof}

Posons $\forall n \geq 2$, $u_{n}=2cos(\frac{\pi}{n})$ et $A_{n}=\begin{pmatrix}
   u_{n} & -1 \\[4pt]
    1    & 0 
   \end{pmatrix}$.
\\
\\Soit $\epsilon \in ]0,2]$. $\lim\limits_{n \rightarrow +\infty} u_{n}=2$, donc $\exists n \in \bN$, $n \geq 2$, tel que $u_{n} > 2-\epsilon$. Le polyn\^ome caract\'eristique de $A_{n}$ est :\[\chi_{A_{n}}(X)=det(A_{n}-XId)=-X(u_{n}-X)+1=X^{2}-u_{n}X+1.\] Le discriminant de ce polyn\^ome est $\Delta=u_{n}^{2}-4~<~0$. $\chi_{A_{n}}$ a deux racines complexes conjugu\'ees $x_{1}$ et $x_{2}$ avec : \[x_{1}=\frac{u_{n}+i\sqrt{-u_{n}^{2}+4}}{2}~{\rm et}~x_{2}=\frac{u_{n}-i\sqrt{-u_{n}^{2}+4}}{2}.\]

Or, on a :
\begin{eqnarray*}
x_{1} &=& \frac{u_{n}+i\sqrt{-u_{n}^{2}+4}}{2} \\
      &=& cos(\frac{\pi}{n})+i\sqrt{-cos(\frac{\pi}{n})^{2}+1} \\
			&=& cos(\frac{\pi}{n})+i\sqrt{sin(\frac{\pi}{n})^{2}} \\
			&=& cos(\frac{\pi}{n})+i~sin(\frac{\pi}{n})~{\rm (car }~sin(\frac{\pi}{n}) \geq 0) \\
			&=& e^{\frac{i\pi}{n}}. \\
\end{eqnarray*}

Donc, $A_{n}$ est diagonalisable et les valeurs propres de $A_{n}$ sont $e^{\frac{i\pi}{n}}$ et $e^{\frac{-i\pi}{n}}$. Ainsi, $(A_{n})^{n}=-Id$. Donc, le $n$-uplet form\'e uniquement de $2cos(\frac{\pi}{n})$ est solution de \eqref{p} et $2cos(\frac{\pi}{n}) > 2-\epsilon$.

\end{proof}

\subsection{Continuants}
\label{C}

On aura \'egalement besoin dans la suite du r\'esultat suivant sur l'expression de la matrice $M_{n}(a_{1},\ldots,a_{n})$ en terme de d\'eterminant. On pose $K_{-1}=0$, $K_{0}=1$ et on note pour $i \geq 1$ \[K_i(a_{1},\ldots,a_{i})=
\left|
\begin{array}{cccccc}
a_1&1&&&\\[4pt]
1&a_{2}&1&&\\[4pt]
&\ddots&\ddots&\!\!\ddots&\\[4pt]
&&1&a_{i-1}&\!\!\!\!\!1\\[4pt]
&&&\!\!\!\!\!1&\!\!\!\!a_{i}
\end{array}
\right|.\] $K_{i}(a_{1},\ldots,a_{i})$ est le continuant de $a_{1},\ldots,a_{i}$. On dispose de l'\'egalit\'e suivante (voir \cite{CO,MO}) : \[M_{n}(a_{1},\ldots,a_{n})=\begin{pmatrix}
    K_{n}(a_{1},\ldots,a_{n}) & -K_{n-1}(a_{2},\ldots,a_{n}) \\
    K_{n-1}(a_{1},\ldots,a_{n-1})  & -K_{n-2}(a_{2},\ldots,a_{n-1}) 
   \end{pmatrix}.\]
	
\section{D\'emonstration du th\'eor\`eme}
\label{demo}

On peut maintenant d\'emontrer le th\'eor\`eme \ref{25}. Dans la suite, si $P$ est un polyn\^ome, on notera $deg(P)$ le degr\'e de $P$ et si $x$ est un r\'eel on notera $E[x]$ sa partie enti\`ere.

\begin{proof}

Soit $\alpha$ un nombre complexe transcendant. Les \'el\'ements de $\bZ[\alpha]$ sont de la forme $P(\alpha)$ avec $P$ un polyn\^ome \`a coefficients entiers.
\\
\\Commen\c{c}ons par trouver les $\lambda$-quiddit\'es irr\'eductibles sur $\bZ[\alpha]$ de taille 3 et 4. Par la proposition \ref{31}, $(1,1,1)$ et $(-1,-1,-1)$ sont les seules $\lambda$-quiddit\'es de taille 3 sur $\bZ[\alpha]$, et, par la proposition \ref{33}, elles sont irr\'eductibles. Par la proposition \ref{31}, les $\lambda$-quiddit\'es de taille 4 sur $\bZ[\alpha]$ sont les 4-uplets de la forme $(P(\alpha),0,-P(\alpha),0)$, $(0,P(\alpha),0,-P(\alpha))$ (avec $P \in \bZ[X]$) qui sont \'equivalents et $(Q(\alpha),R(\alpha),Q(\alpha),R(\alpha))$ avec $R(\alpha)Q(\alpha)=2$. 
\\
\\Soit $(Q,R) \in \bZ[X]$ avec $R(\alpha)Q(\alpha)=2$. Posons $T(X)=R(X)Q(X)$. Si $deg(T) \geq 1$ alors $T(X)-2$ est un polyn\^ome non constant \`a coefficients entiers annulant $\alpha$. Ceci est absurde car $\alpha$ est transcendant. Ainsi, $T$ est constant et donc $Q$ et $R$ sont constants. En particulier, un de ces deux polyn\^omes vaut n\'ecessairement 1 ou -1 et donc la $\lambda$-quiddit\'e est r\'eductible par la proposition \ref{33}.
\\
\\Une solution \'equivalente \`a $(P(\alpha),0,-P(\alpha),0)$ est r\'eductible si et seulement si $P(\alpha)= \pm 1$ (voir proposition \ref{33}). Or, si $P(\alpha)=\pm 1$ alors $P$ est constant \'egal \`a $\pm 1$ (car sinon $P(X)-1$ est un polyn\^ome non constant \`a coefficients entiers annulant $\alpha$ ce qui est impossible).
\\
\\On va maintenant montrer que les $\lambda$-quiddit\'es irr\'eductibles sur $\bZ[\alpha]$ sont de taille inf\'erieure \`a 4. Soient $n \in \bN$ et $(P_{1}(\alpha),\ldots,P_{n}(\alpha))$ une $\lambda$-quiddit\'e sur $\bZ[\alpha]$. On commence par montrer qu'un des $P_{i}$ est n\'ecessairement constant.
\\
\\$(P_{1}(\alpha),\ldots,P_{n}(\alpha))$ est une $\lambda$-quiddit\'e donc il existe $\epsilon$ dans $\{-1, 1 \}$ tel que \[M_{n}(P_{1}(\alpha),\ldots,P_{n}(\alpha))=\epsilon Id.\] Par les r\'esultats de la section pr\'ec\'edente on a :\[\begin{pmatrix}
    K_{n}(P_{1}(\alpha),\ldots,P_{n}(\alpha)) & -K_{n-1}(P_{2}(\alpha),\ldots,P_{n}(\alpha)) \\
    K_{n-1}(P_{1}(\alpha),\ldots,P_{n-1}(\alpha))  & -K_{n-2}(P_{2}(\alpha),\ldots,P_{n-1}(\alpha)) 
   \end{pmatrix}=\epsilon Id.\]  Posons $Q(X)=K_{n}(P_{1}(X),\ldots,P_{n}(X))$. 
\\
\\$Q(X) \in \bZ[X]$ car $K_{n}(x_{1},\ldots,x_{n}) \in \bZ[x_{1},\ldots,x_{n}]$ et les $P_{i}$ sont \`a coefficients entiers. $Q$ est un polyn\^ome constant \'egal \`a $\epsilon$. En effet, si $deg(Q(X)) \geq 1$ alors $R(X)=Q(X)-\epsilon$ est un polyn\^ome non constant \`a coefficients entiers annulant $\alpha$. Ceci est absurde puisque $\alpha$ est transcendant.
\\
\\Ainsi, $K_{n}(P_{1}(X),\ldots,P_{n}(X))=\epsilon$. De m\^eme, $K_{n-2}(P_{2}(X),\ldots,P_{n-1}(X))=-\epsilon$ et $K_{n-1}(P_{2}(X),\ldots,P_{n}(X))=K_{n-1}(P_{1}(X),\ldots,P_{n-1}(X))=0$. On en d\'eduit que si $a\in \bZ$ alors $(P_{1}(a),\ldots,P_{n}(a))$ est une $\lambda$-quiddit\'e sur $\bZ$. 
\\
\\Supposons par l'absurde que pour tout $i$ appartenant \`a $[\![1;n]\!]$ $deg(P_{i}) \geq 1$.
\\
\\ $\forall i \in [\![1;n]\!]$, $P_{i}$ n'est pas constant donc $\lim\limits_{x\rightarrow +\infty} \left| P_{i}(x)\right| = +\infty$.
\\
\\Donc, $\forall i \in [\![1;n]\!]$, $\exists M_{i} \in \bR$ tel que $\forall x \geq M_{i}$, $\left| P_{i}(x)\right| \geq 3$. Donc, si on note $M={\rm max}(M_{i}, i \in [\![1;n]\!])$ on a pour tout r\'eel $x$ sup\'erieur \`a $M$ et pour tout $i$ dans $[\![1;n]\!]$, $\left| P_{i}(x)\right| \geq 3$.
\\
\\Posons $a=E[M]+1>M$. $(P_{1}(a),\ldots,P_{n}(a))$ est une $\lambda$-quiddit\'e sur $\bZ$ et pour tout $i$ dans $[\![1;n]\!]$, $\left| P_{i}(a)\right| \geq 3$. Ceci est absurde par le th\'eor\`eme \ref{34}. 
\\
\\Donc, un des $P_{i}$ est constant. On souhaite maintenant montrer qu'un des polyn\^omes constants vaut 0, 1 ou -1. Notons $i_{1},\ldots,i_{r}$ les \'el\'ements de $[\![1;n]\!]$ pour lesquels les $P_{i}$ sont constants.
\\
\\Puisque $(P_{1}(a),\ldots,P_{n}(a))$ est une $\lambda$-quiddit\'e sur $\bZ$, $\exists j_{0} \in [\![1;n]\!]$ tel que $\left| P_{j_{0}}(a)\right|<2$ par le th\'eor\`eme \ref{34}. Comme $P_{j_{0}}(a) \in \bZ$, on a $P_{j_{0}}(a) \in \{-1, 0, 1\}$. Par ce qui pr\'ec\`ede, $j_{0} \in \{i_{1},\ldots,i_{r}\}$ et donc un des $P_{i}$ est constant \'egal \`a 0, 1 ou -1.
\\
\\Donc, les $\lambda$-quiddit\'es sur $\bZ[\alpha]$ de taille sup\'erieure \`a 5 sont r\'eductibles (voir proposition \ref{33}). Ceci conclut la preuve. 

\end{proof}

\end{document}